\newcommand{\lt}{{}^\mathrm t\hspace{-0.5mm}}
\newcommand{\ctext}[1]{\raise0.2ex\hbox{\textcircled{\scriptsize{#1}}}}
\newcommand{\pmat}[1]{\begin{pmatrix} #1 \end{pmatrix}}
\theoremstyle{plain} 
\newtheorem{thm}{Theorem}[section] 
\newtheorem{prop}[thm]{Proposition} 
\newtheorem{lem}[thm]{Lemma} 
\newtheorem{cor}[thm]{Corollary} 
\theoremstyle{definition} 
\newtheorem{defn}[thm]{Definition} 
\newtheorem{rem}[thm]{Remark} 
\theoremstyle{remark} 
\title[Rational curves on a smooth Hermitian surface II]{{\parbox{\textwidth}{\ttfamily\centering\fontsize{13.0pt}{15.0pt}\selectfont Rational curves on a smooth Hermitian surface II:
\\moduli and counting of certain infinite families}}}
\author
{Norifumi Ojiro}
\address{
Deta and Mathematical Sciences Course, 
Graduate School of Software
Information Science, 
Iwate Prefectural University \\
152-52, Sugo, Takizawa, Iwate, 020-0693, Japan
}
\email{norifumi.ojiro@gmail.com}
\date{}
\subjclass[2020]{14H10, 14N05, 14M20}
\keywords{positive characteristic, projective equivalence class, moduli space, curve counting}
\begin{document}
\maketitle

\begin{abstract}
A smooth $k$-Hermitian surface $X$ is a surface projectively isomorphic over $k$ to the Fermat surface of degree $q+1$, where $k$ is an algebraic closure of a finite field with $q^2$ elements.
We consider the curves on $X$ parametrized by polynomials with exactly $4$ terms under a suitable choice of the parameter. We determine the $k$-projective equivalence classes and the moduli spaces of such curves of all degrees. 
It is shown that each equivalence class has only one orbit under the action of the group of the automorphisms of $X$ if $q\ge3$
while if $q=2$ all the classes split into infinitely many orbits, and moreover, the moduli spaces are affine algebraic sets.
We also determine the number of
the curves belonging to each orbit.
In addition, the smoothness, the reflexivity and 
the minimal field of definition
for such curves are shown.
\end{abstract}

\section{Introduction}
Let $k$ be an algebraic closure of a finite field $\mathbb{F}_{q^2}$ for a prime power $q$.
Let $M=(m_{i,j})\mapsto M^{(q)}=({m_{i,j}}^q)$ be the $q$-Frobenius map for an arbitrary matrix $M$ with entries in $k$.
We denote a point in a projective space by a column vector and denote the transpose of $M$ by $\lt M$.
Then a $k$-Hermitian surface is defined by
$$
X_A=\left\{\bm{x}
\in\mathbb{P}^3\middle\vert\lt\bm{x}A\bm{x}^{(q)}=0\right\},
$$ 
where $A$ is a nonzero $4$-by-$4$ matrix with entries in $k$.
It can be easily proven that $X_A$ is smooth if and only if $A$ is invertible.
If $A$ is Hermitian i.e.\ $\lt A=A^{(q)}$ then $X_A$ is simply called a Hermitian surface which is defined over $\mathbb{F}_{q^2}$.
Especially, if $A$ is equal to the identity matrix $I$, the surface $X_I$ is the Fermat surface of degree $q+1$.
It is known that all smooth $k$-Hermitian surfaces are projectively isomorphic over $k$ to $X_I$.
Hereafter we assume that the matrix $A$ is invertible i.e. $A\in{\rm GL}_4(k)$.

Smooth $k$-Hermitian varieties are not only fundamental objects in algebraic geometry
but also important for the application to combinatorics, code and lattice, see e.g.\ \cite{BC},\cite{C},\cite{L},\cite{Sh}, because of high rationality and high symmetry.
In fact,
it is known in the theory of algebraic surface that
$X_A$ is rational for $q=2$, but non-rational for $q\ge3$ whereas unirational, and so supersingular, especially $K3$ for $q=3$, cf.\ \cite{S},\cite{SK}.
As for combinatorics on the rational curves of degree $q+1$ on $X_A$, it was recently shown in \cite{O1} that such curves
produce
infinite families of certain strongly regular graphs and association schemes, like the case of lines on the surface.

Since our concern is in rational curves of degree $>1$ on $X_A$, it is restricted to non-planar rational curves, that is, rational curves not contained in any plane, because a plane section of $X_A$ is a smooth $k$-Hermitian curve (which is nonrational) or a union of lines.
Let $C_d$ be a rational curve of degree $d$ in $\mathbb{P}^3$. Then there is
a nonzero 4-by-($d+1$) matrix $F$ with entries in $k$ such that
\begin{equation*}
C_d=\left\{F\ {}^\mathrm t\hspace{-0.5mm}(s^d,s^{d-1}t,\dots,st^{d-1},t^d)\in\mathbb{P}^3\ \middle\vert\ {}^\mathrm t\hspace{-0.5mm}(s,t)\in\mathbb{P}^1\right\}.
\end{equation*}
Obviously, if $C_d$ is non-planar, the rank of $F$ is $4$ and $d\ge3$.

Let $F^*\in{\rm GL}_4(k)$ and let $d,i,j$ be positive integers such that $\gcd(d,i,j)=1$ for $1\le i\neq j\le d-1$.
Define
\begin{align*}
C_{F^*}(d,i,j)&=\left\{F^*\,\lt(s^{d},s^{d-i}t^{i},s^{d-j}t^{j},t^{d})\in\mathbb{P}^3\ \middle\vert\ {}^\mathrm t\hspace{-0.5mm}(s,t)\in\mathbb{P}^1\right\}.
\end{align*}
Then $C_{F^*}(d,i,j)$ is a non-planar rational curve of degree $d$.
Put
$$
\mathcal{T}_d=\left\{C_d\subset X_A\ \middle\vert\ 
C_d=C_{F^*}(d,i,j)\ {\rm under\ a\ suitable\ choice\ of}\ s,t
\right\}.
$$
In the previous paper \cite{O}, we treated the set $\mathcal{R}_d$ of all rational curves of degree $d$ for $2\le d\le q+1$ on $X_A$.
Especially we showed the transitivity of the action of the automorphism group of $X_A$ on $\mathcal{R}_{q+1}$ and gave the number of $\mathcal{R}_{q+1}$.
Here $\mathcal{R}_{q+1}$ is nothing but $\mathcal{T}_{q+1}$.
In this paper, we extend the previous result to the case of $\mathcal{T}_d$ for all $d$, together with adding a lot of new material.
The main result is described as follows:
\begin{thm}\label{th0}
If $\mathcal{T}_d$ is nonempty then
\begin{center}
$d=q+1$, $d=q(q+1)$ for $q$ even, or $d=\tfrac{q(q+1)}{2}$ for $q$ odd,
\end{center}
and then all curves of $\mathcal{T}_d$ 
are projectively isomorphic over $k$ to
$$
C_I(q+1, 1, q),\ C_I\left(q(q+1),q+1,q^2+1\right),\ 
{\rm or}\ 
C_I\left(\tfrac{q(q+1)}{2},\tfrac{q+1}{2}, \tfrac{q^2+1}{2}\right),
$$
respectively.
Further, all curves of $\mathcal{T}_{q+1}$ are smooth but those of $\mathcal{T}_{q(q+1)}$,$\mathcal{T}_{\frac{q(q+1)}{2}}$
both have just $\lt(1,0,0,0),\lt(0,0,0,1)$ as singular points, and these curves are all non-reflexive.
\end{thm}

The group of the automorphisms of $X_A$ is defined by
\begin{equation*}
{\rm Aut}(X_A)=
\{P\in{\rm GL}_4(k)\ |\ {}^\mathrm t PAP^{(q)}=\lambda A,\ \lambda\in k^{\times}\}/k^{\times}I.
\end{equation*}
Then ${\rm Aut}(X_A)$ naturally acts on $\mathcal{T}_d\neq\emptyset$. The following two theorems completely determine the moduli space of $
\mathcal{T}_d$.
\begin{thm}\label{thm1}
Let
$$
d\in\left\{q+1,\ \ q(q+1)\ {\rm for}\ 2\mid q,\ \ \tfrac{q(q+1)}{2}\ {\rm for}\ 2\nmid q
\right\}.
$$
Then
$$
|{\rm Aut}(X_A)\backslash \mathcal{T}_d|=
\begin{cases}
\infty&{\rm for}\ q=2,\\
1&{\rm for}\ q\ge3.
\end{cases}
$$
Especially, when $q\ge3$ the moduli space of $
\mathcal{T}_{q+1}$ equals to $\left\{(0,0)\right\}\subset\mathbb{A}^2$ and those of $\mathcal{T}_{q(q+1)},\mathcal{T}_{\frac{q(q+1)}{2}}$ equal to $\left\{0\right\}\subset\mathbb{A}^1$.
\end{thm}
\begin{thm}\label{modu}
Let $q=2$. The set ${\rm Aut}(X_A)\backslash \mathcal{T}_{q(q+1)}$ is bijectively parameterized by
$\mathbb{A}^1$,
and moreover, ${\rm Aut}(X_A)\backslash \mathcal{T}_{q+1}$ is bijectively parameterized by the disjoint union of $\mathbb{A}^2$ and $A_1\backslash{\rm SL_2}(k)$, where $A_1$ is the cyclic subgroup of order $2$ generated by
{\footnotesize$\pmat{0&1\\1&0}$},
that is the smooth rational hypersurface in $\mathbb{A}^4$ defined by $x^2w+y^2z+xy+1=0$.
\end{thm}
As observed above, the connected component of ${\rm Aut}(X_A)\backslash \mathcal{T}_{q+1}$
corresponding to $A_1\backslash{\rm SL_2}(k)$
is native to $q=2$ and those of ${\rm Aut}(X_A)\backslash \mathcal{T}_{q(q+1)}$, ${\rm Aut}(X_A)\backslash \mathcal{T}_{q+1}$ corresponding to $\mathbb{A}^1,\mathbb{A}^2$ collapse to singleton when $q\ge3$, respectively. As we will see later, the parametrizations are realized by polynomial isomorphism. Hence we have the following.
\begin{cor}
Let $q=2$. 
The set ${\rm Aut}(X_A)\backslash \mathcal{T}_{q(q+1)}$ is a line isomorhic to
$\mathbb{A}^1$ and ${\rm Aut}(X_A)\backslash \mathcal{T}_{q+1}$ is an algebraic set isomorphic to
$\mathbb{A}^2\sqcup A_1\backslash{\rm SL_2}(k)$.
\end{cor}

When $A$ is Hermitian, the surface $X_A$ is defined over $\mathbb{F}_{q^2}$, furthermore if $q=2$ then $X_A$ is rational,   and so all rational curves on $X_A$ are defined over $\mathbb{F}_{q^2}$.
The following result
guarantees for all $q$ that if $A$ is Hermitian then all curves of $\mathcal{T}_{q+1},\mathcal{T}_{\frac{q(q+1)}{2}}$ can be defined over $\mathbb{F}_{q^2}$ and those of $\mathcal{T}_{q(q+1)}$ can be defined over $\mathbb{F}_{2q^2}\supset\mathbb{F}_{q^2}$, where $\mathbb{F}_{2q^2}=\mathbb{F}_{2^{2\nu+1}}$, that is, the curves we consider admit models over such finite fields.
\begin{cor}\label{cor2}
Let $C_d$ be an arbitrary curve of $\mathcal{T}_d$ and $C_d(\mathbb{F}_{q^m})$ the set of all $\mathbb{F}_{q^m}$-rational points of $C_d$.
When $A$ is Hermitian,
$C_{q+1}$ is
isomorphic over $\mathbb{F}_{q^2}$ to $\mathbb{P}^1(k)$ and $C_{\frac{q(q+1)}{2}}(\mathbb{F}_{q^2})$ $\left({\rm resp.}\ C_{q(q+1)}(\mathbb{F}_{2q^2})\right)$
is birational over $\mathbb{F}_{q^2}$ to $\mathbb{P}^1(\mathbb{F}_{q^2})$ $\left({\rm resp.}\ \mathbb{P}^1(\mathbb{F}_{2q^2})\right)$.
\end{cor}
A curve of $\mathcal{T}_{q+1}$ has been explicitly presented in
\cite{O}.
The following immediately follows from Corollary \ref{cor2}.
\begin{cor}
Let $C_{q+1}$ be as in Corollary \ref{cor2}.
When $A$ is Hermitian, the automorphism group of $C_{q+1}$ $\left({\rm resp.}\ C_{q+1}(\mathbb{F}_{q^2})\right)$
is isomorphic to ${\rm PGL}_2(k)$ $\left({\rm resp.}\ {\rm PGL}_2(\mathbb{F}_{q^2})\right)$.
\end{cor}
On the other hand, the following theorem brings us the number of the curves belonging to an arbitrary ${\rm Aut}(X_A)$-orbit of $\mathcal{T}_d$.
\begin{thm}\label{th2}
The subgroup of ${\rm Aut}(X_A)$ stabilizing an arbitrary curve of $\mathcal{T}_{q+1}$ is isomorphic to ${\rm PGU}_2(\mathbb{F}_{q^4})$ and those of $\mathcal{T}_{q(q+1)},\mathcal{T}_{\frac{q(q+1)}{2}}$ are isomorphic to the commutative groups
\begin{align*}
&\mathbb{Z}/(q+1)\mathbb{Z}\times\mathbb{Z}/(q^3+1)\mathbb{Z}\ {\rm for}\ 2\mid q,\\
&2\mathbb{Z}/\left(q+1\right)\mathbb{Z}\times\mathbb{Z}/(q^3+1)\mathbb{Z}\ {\rm for}\ 2\nmid q,
\end{align*}
respectively. 
\end{thm}
\begin{cor}\label{mainthm}
Let $d$ be as in Theorem \ref{thm1} and $\mathcal{O}_d$ an arbitrary ${\rm Aut}(X_A)$-orbit of $\mathcal{T}_d$.
Then
\begin{align*}
|\mathcal{O}_{q+1}|&=q^4(q^3+1)(q^2-1),
\\
%
|\mathcal{O}_{q(q+1)}|&=q^6(q^4-1)(q-1)\ {\rm for}\ 2\mid q,
\\
\left|\mathcal{O}_{\frac{q(q-1)}{2}}\right|&=2q^6(q^4-1)(q-1)\ {\rm for}\ 2\nmid q,
\end{align*}
especially if $q\ge3$ then $|\mathcal{T}_d|=|\mathcal{O}_d|$
by Theorem \ref{thm1}.
\end{cor}
In the next section we prove Theorem \ref{th0}, preparing a lemma and combining it with a well-known theorem.
In Section $3$, after
some preparation we prove Theorem \ref{thm1} and Theorem \ref{modu}
by
relating ${\rm Aut}(X_A)\backslash \mathcal{T}_d$ to the set of certain matrices.
In the last one shows that Corollary \ref{cor2} is immediately obtained from Theorem \ref{th0} and Theorem \ref{thm1}.
Section $4$ is devoted to the proof of Theorem \ref{th2}, and then we have Corollary \ref{mainthm}.

\section{
Determination of projective equivalence classes}
Since $\gcd(d,i,j)=1$ by the definition of $C_{F^*}(d,i,j)$,
we may identify $(nd,ni,nj)$ with $(d,i,j)$ for a nonzero $n\in\mathbb{Z}$.
\begin{lem}\label{mat}
Let $A,F^*\in{\rm GL}_4(k)$.
Up to $k$-projective equivalence, a curve $C_{F^*}(d,i,j)$ is contained in $X_A$
if and only if $(d,i,j)$ and $\lt {F^*}A{F^*}^{(q)}$ are of
the following three cases under the above identification:
\begin{itemize}
\item[1.] $(d,i,j)=(q+1,1,q)$,
\begin{equation*}
\lt {F^*}A{F^*}^{(q)}=
\begin{pmatrix}
0&b_{11}&b_{12}&b_{13}\\
0&b_{21}&b_{22}&b_{23}\\
-b_{11}&-b_{12}&-b_{13}&0\\
-b_{21}&-b_{22}&-b_{23}&0\\
\end{pmatrix}, 
\end{equation*}
where the submatrix formed by rows $1$-$2$ and colums $2$-$4$ is of rank $2$,
and $(b_{12},b_{22})=(0,0)$ if $q\ge3$.
\\
\item[2.] $(d,i,j)=\left((q+1)q,q+1,q^2+1\right)$ for $2\mid q$,
\begin{equation*}
\lt {F^*}A{F^*}^{(q)}=
\begin{pmatrix}
0&b_{1}&0&b_{3}\\
0&0&0&b_{2}\\
0&0&b_{2}&0\\
b_{1}&b_{3}&0&0\\
\end{pmatrix},
\end{equation*}
where $b_1b_2\neq0$, and
$b_3=0$ if $q\ge3$,
\\
\item[3.] $(d,i,j)=\left((q+1)q/2,(q+1)/2,(q^2+1)/2\right)$ for $2\nmid q$, 
\begin{equation*}
\lt {F^*}A{F^*}^{(q)}=
\begin{pmatrix}
0&b_{1}&0&0\\
0&0&0&b_{2}\\
0&0&-b_{2}&0\\
-b_{1}&0&0&0\\
\end{pmatrix},
\end{equation*}
where $b_1b_2\neq0$.
\end{itemize}
\end{lem}
\begin{proof}
Put 
\begin{center}
$B=\left(B_{l,m}\right)=\lt {F^*}A{F^*}^{(q)},\ \ f_B(d,i,j)=\bm{v}\,B\,\lt\bm{v}^{(q)}$,
\end{center}
where $\lt\bm{v}=\lt\left(s^{d},s^{d-i}t^{i},s^{d-j}t^{j},t^{d}\right)\in\mathbb{P}^3$, and
$$
E=\left(E_{l,m}\right)=
\begin{pmatrix}
0&qi&qj&qd\\
i&(q+1)i&i+qj&i+qd\\
j&qi+j&(q+1)j&j+qd\\
d&qi+d&qj+d&(q+1)d
\end{pmatrix},
$$
where the $(l,m)$-component of $E$ is the exponent of $t$ in the monomial of $f_B(d,i,j)$ whose coefficient
is the $(l,m)$-component of $B$.
One may assume $i<j$ since $C_{F^*}(d,i,j)$ and $C_{F^*}(d,j,i)$ are $k$-projective equivalent.
Suppose that $C_{F^*}(d,i,j)\subset X_A$.
Then $f_B(d,i,j)$ is identically zero, and so we have some linear equations with respect to $d,i,j$ consisting of the components of $E$ since $B$ is non-zero. Since it is easy to see that the equations have the form $E_{l,m}=E_{l',m'}=E_{i'',m''}$ is impossible from the definition of $E$ and $B$, we have some equations of the form
$
E_{l,m}^{l',m'}:E_{l,m}=E_{l',m'}
$
and then $B_{l,m}+B_{l',m'}=0$.
Other components of $B$ not corresponding to such $E_{l,m}^{l',m'}$ are zero, in particular, $B_{1,1},B_{2,1},B_{3,4},B_{4,4}=0$.
To determine $(d,i,j,B)$ it is sufficient to choose two independent equations of $E_{l,m}^{l',m'}$, because if we choose three of such equations then $(d,i,j)$ has to be $(0,0,0)$.
Note that $E_{3,1}$ can be equal to only $E_{1,2}$ or $E_{2,2}$, similarly $E_{2,4}$ can be equal to only $E_{3,3}$ or $E_{4,3}$ by the definition of $E$ and the assumption $i<j$.
Since ${\rm rank}(B)=4$ we have two equations $E_{3,1}^{1,2}$,$E_{2,4}^{4,3}$, and then $d=(q+1)i$, $j=qi$, thus $(d,i,j)$ is of the case 1 under the identification. Further, these equations precisely lead to $E_{4,1}^{2,2}$, $E_{1,4}^{3,3}$, and if $q=2$ then $E_{3,2}^{1,3}$, $E_{4,2}^{2,3}$, and thus $B$ is as in the case 1.
Other possible choices of the two independent equations
are $\left\{E_{4,1}^{1,2},E_{3,3}^{2,4}\right\}$ and $\left\{E_{3,1}^{2,2},E_{4,3}^{1,4}\right\}$.
Firstly, from the former one has $d=qi$, $(q+1)j=(q^2+1)i$, and if $q=2$ then $E_{4,2}^{1,4}$, and so $B$ is of the cases 2,3.
Since
\begin{equation*}
\gcd(q^2+1,q+1)=
\begin{cases}
1&{\rm for}\ q\ {\rm even}\\
2&{\rm for}\ q\ {\rm odd},
\end{cases} 
\end{equation*}
there is a nonzero $n\in\mathbb{Z}$ such that
\begin{center}
$i=n(q+1)$, $j=n(q^2+1)$ and $d=nq(q+1)$ for $q$ even, 
\end{center}
\begin{center}
$i=n(q+1)/2$, $j=n(q^2+1)/2$ and $d=nq(q+1)/2$ for $q$ odd.
\end{center}
Under the identification, these equal to the cases 2,3, respectively.
Similarly for the latter $\left\{E_{3,1}^{2,2},E_{4,3}^{1,4}\right\}$, one has
\begin{center}
$d=q(q+1)$, $j=q^2-1$ and $i=q-1$ for $q$ even,
\end{center} 
\begin{center}
$d=q(q+1)/2$, $j=(q^2-1)/2$ and $i=(q-1)/2$ for $q$ odd.
\end{center}
Therefore the latter can be identified to the former because $C_{F^*}(d,i,j)$ and $C_{F^*}(d,d-j,d-i)$ are $k$-projective equivalent.
This completes the proof since the opposite assertion is obvious.
\end{proof}
To show the existence of curves $C_{F^*}(d,i,j)$ with $\lt {F^*}A{F^*}^{(q)}$ of Lemma \ref{mat}, we apply the following important theorem, see \cite{PM},\cite[Proposition 2.5.]
{Sh} for a proof.
\begin{prop}\label{prop1}
For every $A\in{\rm GL}_n(k)$, there is an element $B\in{\rm GL}_n(k)$ such that $A={}^\mathrm t\hspace{-0.5mm} BB^{(q)}$, where if $A$ is Hermitian then $B\in{\rm GL}_n(\mathbb{F}_{q^2})$.
\end{prop}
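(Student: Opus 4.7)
The plan is to handle the two assertions separately. For the Hermitian case ($\lt A = A^{(q)}$), I would argue by Gram--Schmidt over $\mathbb{F}_{q^2}$: the hypothesis forces $A \in GL_n(\mathbb{F}_{q^2})$, and $A$ defines on $\mathbb{F}_{q^2}^n$ a non-degenerate sesquilinear Hermitian form $h(x, y) := \lt x A y^{(q)}$ satisfying $h(y, x) = h(x, y)^q$. The key steps would be: (i) exhibit a vector $v$ with $h(v, v) \neq 0$---if every vector were isotropic, polarization gives $h(x, y) + h(x, y)^q = 0$, and since $h(x, \lambda y) = \lambda^q h(x, y)$, the relation $\lambda^q h(x, y) + \lambda h(x, y)^q = 0$ for all $\lambda \in \mathbb{F}_{q^2}$ forces $h(x, y) = 0$ (because $\lambda^{q-1}$ is not constant on $\mathbb{F}_{q^2}^*$), contradicting non-degeneracy; (ii) rescale $v$ using surjectivity of the norm map $N \colon \mathbb{F}_{q^2}^* \to \mathbb{F}_q^*$, $\mu \mapsto \mu^{q+1}$, to obtain $h(v, v) = 1$; (iii) recurse on $v^\perp$, which is itself non-degenerate since any element of its radical would lie in the radical of the whole form. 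The resulting orthonormal basis yields $B_1 \in GL_n(\mathbb{F}_{q^2})$ with $\lt B_1 A B_1^{(q)} = I$, so $B := B_1^{-1}$ is the desired matrix, automatically defined over $\mathbb{F}_{q^2}$.

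For an arbitrary $A \in GL_n(k)$, I would invoke the Lang--Steinberg theorem applied to the connected algebraic group $G = GL_n$ endowed with the endomorphism
\begin{equation*}
\sigma(B) := (\lt B)^{-(q)}.
\end{equation*}
Because $B \mapsto (\lt B)^{-1}$ is a group automorphism (the composition of the two anti-automorphisms $B \mapsto \lt B$ and $B \mapsto B^{-1}$) that commutes with Frobenius, $\sigma$ is an algebraic group endomorphism; a direct computation shows $\sigma^2(B) = B^{(q^2)}$, so $\sigma$ is a Steinberg endomorphism whose fixed-point subgroup $G^\sigma = \{B : B \lt B^{(q)} = I\}$ is the finite unitary group $U_n(\mathbb{F}_{q^2})$. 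Lang's theorem then gives the surjectivity of $B \mapsto \sigma(B) B^{-1} = (B \lt B^{(q)})^{-1}$, so given $A \in GL_n(k)$ there exists $B$ with $B \lt B^{(q)} = A$; replacing $B$ by its transpose $\lt B$ then converts this into $A = \lt B' (B')^{(q)}$ in the required form.

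The principal obstacle is the structural verification of $\sigma$: one must carefully use $\lt(XY) = \lt Y \lt X$ together with the commutativity of Frobenius with both transposition and inversion to confirm that $\sigma$ is a genuine group \emph{endomorphism} (rather than an anti-endomorphism) and that its square is the $\mathbb{F}_{q^2}$-Frobenius, so that Lang--Steinberg really applies. Once this bookkeeping is in place, the existence statement is an immediate consequence of Lang's theorem, and the Hermitian refinement is routine Gram--Schmidt.
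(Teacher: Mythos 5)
The paper does not prove this proposition; it is quoted with references to Beauville and to Shimada's Proposition 2.5, whose argument is precisely the Lang--Steinberg argument you give (applied to $GL_n$ with the twisted Frobenius $B\mapsto(\lt B)^{-(q)}$). Your write-up is correct: $\sigma$ is indeed a group endomorphism with $\sigma^2$ the $q^2$-power Frobenius, so Lang's theorem yields the surjectivity of $B\mapsto B\,\lt B^{(q)}$, and transposing puts it in the stated form; the Gram--Schmidt argument for the Hermitian rationality refinement is also sound. So your proposal is a correct, self-contained proof taking essentially the same route as the cited sources.
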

Put
$$
d_1=q+1,\ \ d_2=q(q+1)\ {\rm for}\ 2\mid q,\ \ d_3=\tfrac{q(q+1)}{2}\ {\rm for}\ 2\nmid q.
$$
Let $\mathcal{B}_{d_l}$
be the set of the matrices $\lt {F^*}A{F^*}^{(q)}$
in the case $l=1,2,3$ of Lemma \ref{mat}. 
By Proposition \ref{prop1} we have ${F_l}^*\in{\rm GL}_4(k)$ such that $B_l=\lt {{F_l}^*}A{{F_l}^*}^{(q)}$ for a given $A\in{\rm GL}_4(k)$ and $B_l\in\mathcal{B}_{d_l}$.
Then the curve $C_{{F_l}^*}(d_l,i,j)$ for $(d_l,i,j)$ in the case $l$ satisfies the condition of Lemma \ref{mat}, and thus $\mathcal{T}_{d_l}$ is nonempty.

According to {\cite[\S4]{RV}},
the curve $C_I(q+1,1,q)$ is obtained as the set-theoretic complete intersection of the two surfaces defined by
$$
{x_0}^{q-1}x_2-x_1^{q}=0,\ \ \ x_0x_3-x_1x_2=0
$$
which is known to be smooth, non-planar and non-reflexive,
see \cite{BH}.
Similarly,
$C_{d_2}=C_I(q(q+1),q+1,q^2+1)$ is the s.t.c.i. of the two surfaces defined by
$$
x_0^{q-1}x_3-{x_1}^q=0,\ \ \ x_1{x_3}^q-{x_2}^{q+1}=0.
$$
It can be verified by a routine calculation that the set of the singular points of $C_{d_2}$ is 
$$
\left\{(1,0,0,0),(0,0,0,1)\right\},
$$
and the dual surface
is defined by 
$$
x_0^{q-1}x_2^{q+1}+x_1^{q+1}x_3^{q-1}=0,
$$
and moreover, the projection from the conormal variety of $C_{d_2}$ to the dual surface
has inseparable degree $q$,
and so $C_{d_2}$ is non-reflexive.
Further, $C_{d_3}=C_I\left(\tfrac{q(q+1)}{2},\tfrac{q+1}{2}, \tfrac{q^2+1}{2}\right)$ is the s.t.c.i. of 
the two surfaces 
$$
x_0^{q-1}x_3-{x_1}^q=0,\ \ \ {x_0}^{\tfrac{q-1}{2}}{x_2}^{\tfrac{q+1}{2}}-{x_1}^{\tfrac{q+1}{2}}{x_3}^{\tfrac{q-1}{2}}=0,
$$
and it is easy to verify that the set of the singular points is the same with that of $C_{d_2}$. The dual surface is defined by
$$
{x_0}^{\tfrac{q-1}{2}}{x_2}^{\tfrac{q+1}{2}}+{x_1}^{\tfrac{q+1}{2}}{x_3}^{\tfrac{q-1}{2}}=0,
$$
and $C_{d_3}$ is non-reflexive since the projection
from the conormal variety of $C_{d_3}$ to the dual surface
has inseparable degree $q$.
This completes the proof of Theorem \ref{th0}.
\section{
Determination of moduli spaces}
Let $\mathcal{M}_{m,n}(R)$ be the set of $m$-by-$n$ matrices with the entries in a ring $R$, it is abbreviated by $\mathcal{M}_{m,n}$ if $R=k$ and $\mathcal{M}_m(R)$ if $m=n$.
By definition, $\mathcal{B}_{d_l}\subset\mathcal{M}_4$ for $l=1,2,3$.
We define
\begin{align*}
\mathcal{Q}_{d_l}'&=\left\{(M_{i,j})\in\mathcal{M}_{d_l+1}\ \middle\vert\ M_{i,j}=\begin{cases}B_{f_l(i),f_l(j)}&{\rm if}\ (i,j)\in {D_{d_l}}^2\\0&{\rm if}\ (i,j)\not\in {D_{d_l}}^2
\end{cases},\ B\in\mathcal{B}_{d_l}
\right\},
\end{align*}
where $f_l:D_{d_l}\rightarrow\{1,2,3,4\}$ are the order-preserving bijections and
\begin{align*}
D_{d_1}&=\left\{1,2,q+1,d_1+1\right\},\\
D_{d_2}&=\left\{1,q+2,q^2+2,d_2+1\ \middle\vert\ {\rm for}\ q\ {\rm even}\right\},\\
D_{d_3}&=\left\{1,\tfrac{q+1}{2}+1,\tfrac{q^2+1}{2}+1,d_3+1\ \middle\vert\ {\rm for}\ q\ {\rm odd}\right\}.
\end{align*} 

Let ${\rm Im}(\varphi_{d_l})$ be the image of the group homomorphism
$$
\varphi_{d_l}:{\rm GL}_2(k)\ni g\mapsto\varphi_{d_l}(g)\in{\rm GL}_{{d_l}+1}(k)
$$
defined by
\begin{equation*}
\lt(u^{d_l},u^{d_l-1}v,\dots,v^{d_l})=\varphi_{d_l}(g)\,\lt(s^{d_l},s^{d_l-1}t,\dots,t^{d_l})\ {\rm for} \ \lt(u,v)=g\,\lt(s,t).
\end{equation*}
Then ${\rm Im}(\varphi_{d_l})$ is a subgroup of ${\rm GL}_{d_l+1}(k)$.
Further, we define
\begin{align*}
\mathcal{Q}_{d_l}&=\left\{M\in\mathcal{M}_{d_l+1}\ \left|\ \lt\varphi_{d_l}(g_l)M\varphi_{d_l}(g_l)^{(q)}\in {\mathcal{Q}_{d_l}}'\ {\rm for}\ {\rm some}\ g_l\in{\rm GL}_2(k)\right\}\right.,\\
\mathcal{S}_{d_l}&=\left\{F\in\mathcal{M}_{4,d_l+1}\ \left|\ \lt FAF^{(q)}\in\mathcal{Q}_{d_l}\right\}\right..
\end{align*}
The group ${\rm Im}(\varphi_{d_l})$
acts on $\mathcal{S}_{d_l}$ as $F\mapsto F\varphi_{d_l}(g)$ for all $g\in{\rm GL}_2(k)$.
Indeed, since there is $g_l\in{\rm GL}_2(k)$ for each $F\in \mathcal{S}_{d_l}$ by definition such that
$$
\lt\varphi_{d_l}(g_l)\,\lt FAF^{(q)}\,\varphi_{d_l}(g_l)^{(q)}\in{\mathcal{Q}_{d_l}}',
$$
we have
$$
\lt\varphi_{d_l}(g^{-1}g_l)\,\lt\left\{F\varphi_{d_l}(g)\right\}A\left\{F\varphi_{d_l}(g)\right\}^{(q)}\,\varphi_{d_l}(g^{-1}g_l)^{(q)}\in{\mathcal{Q}_{d_l}}',
$$
and so $F\varphi_{d_l}(g)\in \mathcal{S}_{d_l}$.
\begin{defn}
Let $*$ be the map
\begin{align*}
&\mathcal{M}_{d_l+1}\rightarrow\mathcal{M}_{4};\ M=(M_{i,j})\mapsto M^*=(M^*_{i,j}),\\
&\mathcal{M}_{4,d_l+1}\rightarrow\mathcal{M}_{4};\ N=(\bm{n}_1,\dots,\bm{n}_{d_l+1})\mapsto N^*=({\bm{n}^*}_{1},{\bm{n}^*}_{2},{\bm{n}^*}_{3},{\bm{n}^*}_{4}),\\
&\mathcal{M}_{d_l+1,4}\rightarrow\mathcal{M}_{4};\ G=(\bm{g}_1,\bm{g}_2,\bm{g}_3,\bm{g}_{4})\mapsto \lt\left\{\left(\lt G\right)^*\right\},
\end{align*}
where $M^*_{i,j}=M_{{f_l}^{-1}(i),{f_l}^{-1}(j)}$ and ${\bm{n}^*}_{j}=\bm{n}_{{f_l}^{-1}(j)}$. 
\end{defn}
Note that $d_l\ge3$, in particular, $d_l=3$ if and only if  $l=1$ and $q=2$, then the map is identical.
Also, it is obvious that
$*$ is commutative with the transpose and the Frobenius map, i.e.\ $\left(\lt M\right)^{*}=\lt\left(M^{*}\right)$ and $\left(M^{(q)}\right)^{*}=\left(M^{*}\right)^{(q)}$ for all $M$ in the domain of $*$.
\begin{rem}\label{RM3}
By definition, for every $F\in\mathcal{S}_{d_l}$ there is $g_l\in{\rm GL}_2(k)$ such that $\lt\varphi_{d_l}(g_l)\lt FAF^{(q)}\varphi_{d_l}(g_l)^{(q)}\in {\mathcal{Q}_{d_l}}'$. Since ${\mathcal{Q}_{d_l}}'\rightarrow{{\mathcal{Q}_{d_l}}'}^*=\mathcal{B}_{d_l}\subset{\rm GL}_4(k)$ is bijective and
$$
\left(\lt\varphi_{d_l}(g_l)\lt FAF^{(q)}\varphi_{d_l}(g_l)^{(q)}\right)^*=\lt\left[\left\{F\varphi_{d_l}(g_l)\right\}^*\right]A\left[\left\{F\varphi_{d_l}(g_l)\right\}^*\right]^{(q)},
$$
we have $\left\{F\varphi_{d_l}(g_l)\right\}^*\in{\rm GL}_4(k)$. 
\end{rem}
Thus, the following proposition is obtained, also cf.\  \cite[Section 3]{O}.
\begin{prop}\label{prop31}
For each $l=1,2,3$, the map
\begin{equation*}
\begin{array}{ccc}
k^{\times}\backslash \mathcal{S}_{d_l}/{\rm Im}(\varphi_{d_l})&\longrightarrow& \mathcal{T}_{d_l}\\
\rotatebox{90}{$\in$}&&\rotatebox{90}{$\in$}\\
k^{\times}F{\rm Im}(\varphi_{d_l})&\longmapsto&
C_{F^*}(d_l,i_l,j_l)
\end{array}
\end{equation*}
is a bijection, and therefore
$$
{\rm Aut}(X_A)\backslash \mathcal{S}_{d_l}/{{\rm Im}(\varphi_{d_l})}\rightarrow{\rm Aut}(X_A)\backslash \mathcal{T}_{d_l}
$$
is also a bijection, where ${\rm Aut}(X_A)$ acts on $k^{\times}\backslash \mathcal{S}_{d_l}/{{\rm Im}(\varphi_{d_l})}$ and $\mathcal{T}_{d_l}$ by the matrix multiplication from the left.
\end{prop}
Note that
$C_{F^*}(d_l,i_l,j_l)=C_{\left\{F\varphi_{d_l}(g)\right\}^*}(d_l,i_l,j_l)$ for all $g\in{\rm GL}_2(k)$ in the above proposition.
 
Further, ${\rm Im}(\varphi_{d_l})$ acts on $\mathcal{Q}_{d_l}$ as $M\mapsto\lt\varphi_{d_l}(g)M\varphi_{d_l}(g)^{(q)}$ for all $g\in{\rm GL}_2(k)$.
We denote by $\sim$ the equivalence relation induced by this action, and by $M^{\varphi_{d_l}}$ the equivalence class of $M\in \mathcal{Q}_{d_l}$. 
\begin{lem}\label{bico}
For each $l=1,2,3$, the map
\begin{equation*}
\begin{array}{ccc}
{\rm Aut}(X_A)\backslash \mathcal{S}_{d_l}/{{\rm Im}(\varphi_{d_l})}&\longrightarrow& k^{\times}\backslash \mathcal{Q}_{d_l}/\sim\\
\rotatebox{90}{$\in$}&&\rotatebox{90}{$\in$}\\
{\rm Aut}(X_A)F\,{{\rm Im}(\varphi_{d_l})}&\longmapsto&k^{\times}({}^\mathrm t\hspace{-0.5mm}{F}A{F}^{(q)})^{\varphi_{d_l}}
\end{array}
\end{equation*}
is a bijection.
\end{lem}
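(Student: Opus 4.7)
The plan is to verify well-definedness, surjectivity, and injectivity of the proposed map $F\mapsto\lt F A F^{(q)}$.

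\textbf{Well-definedness.} First I would check that each of the three actions on $S_i$ is compatible with the target relation. Replacing $F$ by $\lambda F$ ($\lambda\in k^{\times}$) multiplies $\lt F A F^{(q)}$ by $\lambda^{q+1}$; replacing $F$ by $F\varphi_i(g)$ transforms it by $\lt\varphi_i(g)(\cdot)\varphi_i(g)^{(q)}$; and replacing $F$ by $PF$ with $\lt PAP^{(q)}=\lambda A$ multiplies it by $\lambda$. Each of these operations preserves the class in $k^{\times}\backslash Q_i/{\sim}$, so the map descends to the stated quotients.

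\textbf{Surjectivity.} Given $M\in Q_i$, by the compatibility of the ${\rm Im}(\varphi_i)$-actions on $S_i$ and $Q_i$, it suffices to treat $M\in Q'_i$. Write $M=\iota M^{*}\lt\iota$, where $\iota$ is the canonical $(d_i+1)\times 4$ injection onto the four specified coordinate positions and $M^{*}\in\mathcal{B}_i$. A direct determinant computation shows that every element of $\mathcal{B}_i$ is invertible (for instance in case I, $\det M^{*}=-(a_{11}a_{23}-a_{13}a_{21})^{2}\neq 0$ by the $k$-linear independence of the first two rows; cases II and III are similar). Applying Proposition \ref{prop1} twice, write $A=\lt H H^{(q)}$ and $M^{*}=\lt G G^{(q)}$ with $H,G\in{\rm GL}_4(k)$, and set $F:=H^{-1}G\lt\iota$. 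A short expansion yields $\lt F A F^{(q)}=\iota M^{*}\lt\iota=M$, so $F\in S_i$ and the class of $M$ is attained.

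\textbf{Injectivity.} Suppose $\lt F_1 A F_1^{(q)}$ and $\lt F_2 A F_2^{(q)}$ represent the same class, so $\lt F_2 A F_2^{(q)}=\lambda\lt\varphi_i(g)\,\lt F_1 A F_1^{(q)}\,\varphi_i(g)^{(q)}$ for some $\lambda\in k^{\times}$ and $g\in{\rm GL}_2(k)$. Choose $\mu\in k^{\times}$ with $\mu^{q+1}=\lambda$ and put $\widetilde F_1=\mu F_1\varphi_i(g)$, giving $\lt F_2 A F_2^{(q)}=\lt{\widetilde F_1}A\widetilde F_1^{(q)}$. With $A=\lt H H^{(q)}$, $X_1=H\widetilde F_1$ and $X_2=HF_2$, this becomes $\lt X_1 X_1^{(q)}=\lt X_2 X_2^{(q)}$. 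Since every matrix in $Q_i$ has rank $4$ (the rank equals that of its $\mathcal{B}_i$-core, which is invertible), both $X_1$ and $X_2$ have rank $4$; in particular $\lt X_j$ is injective and $X_j^{(q)}$ is surjective for $j=1,2$. The former gives $\ker X_1^{(q)}=\ker X_2^{(q)}$, and since $\ker X^{(q)}=(\ker X)^{(q)}$ and Frobenius is bijective on $k$, we deduce $\ker X_1=\ker X_2$, so the row spaces of $X_1,X_2$ in $k^{d_i+1}$ coincide. Hence $X_2=PX_1$ for some $P\in{\rm GL}_4(k)$. Substituting yields $\lt X_1(I-\lt P P^{(q)})X_1^{(q)}=0$, and injectivity of $\lt X_1$ together with surjectivity of $X_1^{(q)}$ forces $\lt P P^{(q)}=I$. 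Setting $P':=H^{-1}PH$, we compute $\lt{P'}A{P'}^{(q)}=\lt H\,\lt P P^{(q)}\,H^{(q)}=\lt H H^{(q)}=A$, so $P'$ represents an element of ${\rm Aut}(X_A)$, and $F_2=P'\widetilde F_1=\mu P'F_1\varphi_i(g)$, placing $F_1,F_2$ in the same double coset.

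\textbf{Main obstacle.} The most delicate step is the passage from $\lt X_1 X_1^{(q)}=\lt X_2 X_2^{(q)}$ to $X_2=PX_1$ with $\lt P P^{(q)}=I$, which hinges on fully exploiting the rank-$4$ property of matrices in $Q_i$. Both the injectivity of $\lt X_j$ (needed to compare kernels, hence row spaces) and the surjectivity of $X_1^{(q)}$ (needed to promote the derived matrix identity to $\lt P P^{(q)}=I$ rather than a mere orthogonality statement) rely on this rank condition. Verifying the rank condition in turn reduces to direct determinant computations on the prescribed shapes of $\mathcal{B}_1,\mathcal{B}_2,\mathcal{B}_3$, which can be carried out as a preliminary step.
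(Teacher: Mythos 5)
Your argument is correct in outline and its surjectivity half is essentially the paper's: reduce to $M\in Q'_i$, factor $A=\lt H H^{(q)}$ and $M^{*}=\lt G G^{(q)}$ via Proposition~\ref{prop1}, and pad $H^{-1}G$ into the four distinguished columns. Your injectivity half, however, takes a genuinely different route. The paper passes to the $4\times4$ ``starred'' submatrices and reads off $k^{\times}F_1^{*}\varphi_i(g)^{*}(F_2^{*})^{-1}\in{\rm Aut}(X_A)$ directly from the starred identity, which is quick but leans on the implicit claim that everything is controlled by those four columns. You instead work with the full $4\times(d_i+1)$ matrices: after normalizing the scalar by a $(q+1)$-st root you reduce to $\lt X_1X_1^{(q)}=\lt X_2X_2^{(q)}$ and use the rank-$4$ property to compare kernels, hence row spaces, producing $P$ with $\lt PP^{(q)}=I$ and then $P'=H^{-1}PH\in{\rm Aut}(X_A)$. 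This is more self-contained and avoids the starred bookkeeping at the cost of a longer linear-algebra argument; both are valid.

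One local error to fix: your determinant formula for case I, $\det M^{*}=-(a_{11}a_{23}-a_{13}a_{21})^{2}$, is only correct when $a_{12}=a_{22}=0$ (i.e.\ $q\ge3$). The Laplace expansion along the first two rows gives
\begin{equation*}
\det M^{*}=-(a_{11}a_{23}-a_{13}a_{21})^{2}+(a_{12}a_{23}-a_{13}a_{22})(a_{11}a_{22}-a_{12}a_{21}),
\end{equation*}
and for $q=2$ the second term need not vanish; worse, the listed shape conditions of case I do not by themselves force $\det M^{*}\neq0$ (e.g.\ $a_{11}=a_{22}=a_{12}=a_{23}=1$, $a_{13}=a_{21}=0$ satisfies them with determinant $0$). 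The invertibility you need is nevertheless available, because $\mathcal{B}_i$ is by definition the set of matrices $B$ of Lemma~\ref{mat}, and there $B$ is assumed invertible from the outset; so you should cite that rather than attempt to rederive it from the displayed shapes. With that substitution (which also covers the rank-$4$ claim for $Q_i$ used in your injectivity step), the proof goes through.
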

\begin{proof}
{\bf Surjectivity}: For every $k^{\times}M^{\varphi_{d_l}}\in k^{\times}\backslash \mathcal{Q}_{d_l}/\sim$,
one may assume $M\in {\mathcal{Q}_{d_l}}'$ by choice of a suitable $g\in{\rm GL}_2(k)$, and then $M^*\in\mathcal{B}_{d_l}$. Since $A,M^*\in{\rm GL}_4(k)$, there are $G,H\in{\rm GL}_4(k)$ such that $M^*=\lt GG^{(q)}$ and $A=\lt HH^{(q)}$ by Proposition \ref{prop1}. Hence by putting $F^*=H^{-1}G$, one has $\lt F^*A{F^*}^{(q)}=M^*$. Now, put $F=(\bm{f}_1,\cdots,\bm{f}_{d_l+1})\in\mathcal{M}_{4,d_l+1}$
so that 
$$
\bm{f}_j=\begin{cases}
\bm{f}^*_{f_l(j)}&{\rm for}\ j\in D_{d_l}\\
\bm{0}&{\rm for}\ j\not\in D_{d_l},
\end{cases}
$$
where $F^*=(\bm{f}^*_1,\bm{f}^*_2,\bm{f}^*_3,\bm{f}^*_4)$.
Then $\lt FAF^{(q)}=M$, this proves the surjectivity.

{\bf Injectivity}: Let $F,G\in \mathcal{S}_{d_l}$ such that $k^{\times}(\lt FA{F}^{(q)})^{\varphi_{d_l}}=k^{\times}(\lt GA{G}^{(q)})^{\varphi_{d_l}}$. 
Take $g_l\in{\rm GL}_2(k)$ so that ${G}{\varphi_{d_l}(g_l)}$ satisfies Remark \ref{RM3}.
Then there are $\lambda\in k^\times$ and $h_l\in{\rm GL}_2(k)$
such that
\begin{equation}\label{eql3.3.1}
\lambda\,\lt\left\{F\varphi_{d_l}(h_l)\right\}A\left\{{F}{\varphi_{d_l}(h_l)}\right\}^{(q)}=\lt\left\{{G}{\varphi_{d_l}(g_l)}\right\}A\left\{{G}{\varphi_{d_l}(g_l)}\right\}^{(q)},
\end{equation}
and then by
\begin{align*}
\left[\lt\left\{F\varphi_{d_l}(h_l)\right\}A\left\{{F}{\varphi_{d_l}(h_l)}\right\}^{(q)}\right]^*&=\left[\lt\left\{F\varphi_{d_l}(h_l)\right\}\right]^*A\left[\left\{{F}{\varphi_{d_l}(h_l)}\right\}^{(q)}\right]^*
,\\ 
\left[\lt\left\{{G}{\varphi_{d_l}(g_l)}\right\}A\left\{{G}{\varphi_{d_l}(g_l)}\right\}^{(q)}\right]^*&=\left[\lt\left\{{G}{\varphi_{d_l}(g_l)}\right\}\right]^*A\left[\left\{{G}{\varphi_{d_l}(g_l)}\right\}^{(q)}\right]^*,
\end{align*}
we have
\begin{equation*}
\lambda\,\left[\lt\left\{F\varphi_{d_l}(h_l)\right\}\right]^*A\left[\left\{{F}{\varphi_{d_l}(h_l)}\right\}^{(q)}\right]^*=\left[\lt\left\{{G}{\varphi_{d_l}(g_l)}\right\}\right]^*A\left[\left\{{G}{\varphi_{d_l}(g_l)}\right\}^{(q)}\right]^*.
\end{equation*}
Since $\left\{{G}{\varphi_{d_l}(g_l)}\right\}^*\in{\rm GL}_4(k)$, so is $\left\{{F}{\varphi_{d_l}(h_l)}\right\}^*$ by the above equality, and so
$$
k^{\times}\left\{{F}{\varphi_{d_l}(h_l)}\right\}^*{\left\{{G}{\varphi_{d_l}(g_l)}\right\}^*}^{-1}
\in{\rm Aut}(X_A),
$$
or equivalently
\begin{equation}\label{eql3.3.2}
k^{\times}\left\{{F}{\varphi_{d_l}(h_l)}\right\}^*
\in{\rm Aut}(X_A){\left\{{G}{\varphi_{d_l}(g_l)}\right\}^*}.
\end{equation}
Since both sides of the equality \eqref{eql3.3.1} belong to ${\mathcal{Q}_{d_l}}'$, putting ${\left\{{G}{\varphi_{d_l}(g_l)}\right\}^*}=(\bm{g}^*_1,\bm{g}^*_2,\bm{g}^*_3,\bm{g}^*_{4})$ (resp. ${\left\{{F}{\varphi_{d_l}(h_l)}\right\}^*}=(\bm{f}^*_1,\bm{f}^*_2,\bm{f}^*_3,\bm{f}^*_{4})$),
the matrix ${G\varphi_{d_l}(g_l)}=(\bm{g}_1,\dots,\bm{g}_{d_l+1})$ (resp. ${F\varphi_{d_l}(h_l)}=(\bm{f}_1,\dots,\bm{f}_{d_l+1})$) is of 
$$
\bm{g}_j\ ({\rm resp.}\ \bm{f}_j)=\begin{cases}
\bm{g}^*_{f_l(j)}\ ({\rm resp.}\ \bm{f}^*_{f_l(j)})&{\rm for}\ j\in D_{d_l}\\
\bm{0}&{\rm for}\ j\not\in D_{d_l}.
\end{cases}
$$
Hence by \eqref{eql3.3.2},
$$
k^{\times}{F}{\varphi_{d_l}(h_l)}
\in{\rm Aut}(X_A){G}{\varphi_{d_l}(g_l)},
$$
and thus
$$
{\rm Aut}(X_A)F\,{{\rm Im}(\varphi_{d_l})}={\rm Aut}(X_A)G\,{{\rm Im}(\varphi_{d_l})}.
$$
This proves the injectivity.
\end{proof}

\begin{rem}
It immediately follows from Lemma \ref{bico} that
the map
\begin{equation*}
\begin{array}{ccc}
k^{\times}\backslash \mathcal{S}_{d_l}/{{\rm Im}(\varphi_{d_l})}&\longrightarrow& k^{\times}\backslash \mathcal{Q}_{d_l}/\sim\\
\rotatebox{90}{$\in$}&&\rotatebox{90}{$\in$}\\
k^\times F\,{{\rm Im}(\varphi_{d_l})}&\longmapsto&k^{\times}({}^\mathrm t\hspace{-0.5mm}{F}A{F}^{(q)})^{\varphi_{d_l}}
\end{array}
\end{equation*}
is surjective but not injective for $l=1,2,3$.
\end{rem}

\begin{lem}\label{lem3}
For all $l=1,2,3$ we have
\begin{equation*}
|k^{\times}\backslash \mathcal{Q}_{d_l}/\sim|=
\begin{cases}
\infty&\ {\rm for}
\ q=2\\
1&\ {\rm for}
\ q\ge3.
\end{cases}
\end{equation*}
\end{lem}
\begin{rem}\label{remRl}
Let 
\setlength{\arraycolsep}{3pt}
\begin{equation*}
{R_1}^*=
\begin{pmatrix}
0&0&0&-1\\
0&1&0&0\\
0&0&1&0\\
-1&0&0&0\\
\end{pmatrix},
\ 
{R_2}^*=
\begin{pmatrix}
0&1&0&0\\
0&0&0&1\\
0&0&1&0\\
1&0&0&0\\
\end{pmatrix},
\ 
{R_3}^*=
\begin{pmatrix}
0&1&0&0\\
0&0&0&1\\
0&0&-1&0\\
-1&0&0&0\\
\end{pmatrix}.
\end{equation*}
Then $R_l\in{\mathcal{Q}_{d_l}}'$, and if $q\ge3$ then $R_l$ forms a complete system of representatives of
$k^{\times}\backslash \mathcal{Q}_{d_l}/\sim$ by Lemma \ref{lem3}.
\end{rem}
\begin{proof}
\mbox{}\\
{\bf (Case where $q\ge3$)}
\\
For all $k^{\times}M_l^{\varphi_{d_l}}\in k^{\times}\backslash \mathcal{Q}_{d_l}/\sim$, one may assume that
$M_l=\left(M_{i,j}^l\right)\in{{\mathcal{Q}_{d_l}}'}$
and then ${M_l}^*=\left({M^l_{i,j}}^*\right)\in\mathcal{B}_{d_l}$ by changing the representative.
Since the case of $\mathcal{Q}_{d_1}$ has been already proven in \cite{O}, we will show those of $\mathcal{Q}_{d_2},\mathcal{Q}_{d_3}$.
In the cases $l=2,3$, when $q\ge3$, by definition ${M^l_{1,4}}^*,{M^l_{4,2}}^*=0$ for all $k^{\times}M_l^{\varphi_{d_l}}\in k^{\times}\backslash \mathcal{Q}_{d_l}/\sim$.
Then there is a matrix
$
g_l=
\begin{pmatrix}
\lambda_l&0\\
0&\mu_l\\
\end{pmatrix}
\in {\rm GL}_2(k)
$
such that 
$$
\left\{\lt\varphi_{d_l}(g_l)\right\}^*{M_l}^*\left\{\varphi_{d_l}(g_l)^{(q)}\right\}^{*}={R_l}^*,
$$
where $R_l$ is of Remark \ref{remRl}.
Indeed, the above equation in $g_l$ is equivalent to
\begin{equation*}
{M^l_{1,2}}^*\lambda_l^{d_lq}\mu_l^{d_l}=1,\ \ \ \ 
{M^l_{2,4}}^*\lambda_l^{(d_l-r_l)(q+1)}\mu_l^{r_l(q+1)}=1,
\end{equation*}
where 
$$
r_l=
\begin{cases}
q^2+1&{\rm for}\ l=2,\\
(q^2+1)/2&{\rm for}\ l=3.
\end{cases}
$$
Since $k$ is algebraically closed, 
the solutions
exist in $k$. Hence the assertion for this case has been proven.
\\
{\bf (Case where $q=2$)}\\
For the case, we separate the proof for $\mathcal{Q}_{d_1}$ and that for $\mathcal{Q}_{d_2}$.\\
{\bf Case of $\mathcal{Q}_{d_1}$:}
Let
\begin{equation*}
{M}^*(x)=
\begin{pmatrix}
0&x&1&0\\
0&1&0&1\\
x&1&0&0\\
1&0&1&0
\end{pmatrix}
\in\mathcal{B}_{d_1}
\ \ 
{\rm for}\ x\in k,
\end{equation*}
and let $M(x)=\left(M(x)_{i,j}\right)$ the matrix of ${\mathcal{Q}_{d_1}}'$ defined by 
$$
M(x)_{i,j}=
\begin{cases}
{M}^*(x)_{f_1(i),f_1(j)}&{\rm for}\ i,j\in D_{d_1},\\
0&{\rm for}\ i,j\not\in D_{d_1}.
\end{cases}
$$
Assume that $k^{\times}M(x)^{\varphi_{d_1}}=k^{\times}M(x')^{\varphi_{d_1}}$ for $x,x'\in k$. Then there are $\lambda\in k^\times$ and $g\in{\rm GL}_2(k)$ such that
$\lambda\,\lt\varphi_{d_1}(g)M(x)\varphi_{d_1}(g)^{(q)}=M(x')$,
and so
$$
\lambda\left\{\lt\varphi_{d_1}(g)\right\}^*{M}^*(x)\left\{\varphi_{d_1}(g)^{(q)}\right\}^*={M}^*(x').
$$
Then by direst calculation,
it turn out that
the matrix $g$ must be scalar, and then $x=x'$.
Therefore
if $x\neq x'$ then $k^{\times}M(x)^{\varphi_{d_1}}\neq k^{\times}M(x')^{\varphi_{d_1}}$. 
This implies the existence of an injection from $k$ to $k^{\times}\backslash \mathcal{Q}_{d_1}/\sim$, and thus
$|k^{\times}\backslash \mathcal{Q}_{d_1}/\sim|=\infty$.
\\
{\bf Case of $\mathcal{Q}_{d_2}$:}
Firstly, note that every $M=\left(M_{i,j}\right)\in{\mathcal{Q}_{d_2}}'$
can be transformed into the form with
\begin{equation}\label{q2f}
M^*_{1,2}=M^*_{4,1}=1,\ \ M^*_{2,4}=M^*_{3,3}=1
\end{equation}
by acting $\varphi_{d_2}(g)$ for a suitable $g\in{\rm GL}_2(k)$ on $M$.
Indeed, such $\varphi_{d_2}(g)$ is implemented by $g=\pmat{a&0\\0&d}$ satisfying 
\begin{center}
$a^{q^2(q+1)}d^{q(q+1)}={M^*_{1,2}}^{-1}$ and $a^{q^2-1}d^{(q^2+1)(q+1)}={M^*_{2,4}}^{-1}$.
\end{center}
Let $M,N$ satisfy \eqref{q2f}.
Assume that
$k^{\times}{M}^{\varphi_{d_2}}=k^{\times}{N}^{\varphi_{d_2}}$ for $M,N\in{\mathcal{Q}_{d_2}}'$.
Then
there are $\lambda\in k^\times$ and $h\in{\rm GL}_2(k)$ such that
$$
\lambda\left\{\lt\varphi_{d_2}(h)\right\}^*{M}^*\left\{{\varphi_{d_2}(h)}^{(q)}\right\}^*={N}^*.
$$
This equality implies that there are $\alpha,\delta\in k$ such that
$$
(\alpha^{-1}\delta)^9=1,\ \ \ \ (\alpha^{-1}\delta)^6M^*_{1,4}=N^*_{1,4}.
$$
It immediately follows from this that $\xi\,M^*_{1,4}=N^*_{1,4}$ for some $\xi\in{\mathbb{F}_4}^\times$.
Consequently, if $\xi\,M^*_{1,4}\neq N^*_{1,4}$ then $k^{\times}{M}^{\varphi_{d_2}}\neq k^{\times}{N}^{\varphi_{d_2}}$.
Since
such $(M^*_{1,4},N^*_{1,4})\in k^\times\times k^\times$ exists infinitely many, one has $|k^{\times}\backslash \mathcal{Q}_{d_2}/\sim|=\infty$, and so the proof has been completed.
\end{proof}
Theorem \ref{thm1} immediately follows from Proposition \ref{prop31} and Lemmas \ref{bico},\ref{lem3}.
\begin{rem}
For $q\ge3$, the last statment of Theorem \ref{thm1} is clear from Lemma \ref{lem3} and Remark \ref{remRl}. 
\end{rem}

\begin{proof}[Proof of Theorem \ref{modu}]
\mbox{}\\
{\bf Case of $
\mathcal{T}_{d_2}$:} From the proof of Lemma \ref{lem3} for 
$q=2$, it immediately follows that $k^{\times}\backslash \mathcal{Q}_{d_2}/\sim$ is bijectively parametrized by $k/{\mathbb{F}_{4}}^\times\simeq\mathbb{A}^1$ and thus so is ${\rm Aut}(X_A)\backslash \mathcal{T}_{d_2}$
by Lemma \ref{bico} and Proposition \ref{prop31}.
\\
{\bf Case of $
\mathcal{T}_{d_1}$:}
Let $M$ be a representative of every $k^{\times}M^{\varphi_{d_l}}\in k^{\times}\backslash \mathcal{Q}_{d_1}/\sim$ such that $M\in{\mathcal{Q}_{d_1}}'$.
Since $M^*\in\mathcal{B}_{d_1}$, we denote $M^*$ by ${M_B}^*$, where
$
B=\pmat{b_{11}&b_{12}&b_{13}\\b_{21}&b_{22}&b_{23}}
$
that is the submatrix formed by rows $1$-$2$ and columns $2$-$4$ of $M^*$ and is the matrix determining $M^*$. 
We prepare the following lemma which is
a variant of 
\cite[Lemma 5]{O}. 
\begin{lem}\label{lem36}
Let $q=2$.
For all $g=\pmat{a&b\\c&d}\in{\rm GL}_2(k)$
we have
$$
\left\{\lt\varphi_{d_1}(g)\right\}^*{M_B}^*\left\{\varphi_{d_1}(g)^{(2)}\right\}^*=\det(g)^2{M_{\lt gBg'^{(2)}}}^*,
$$
where $g'=\pmat{a^2&0&b^2\\ac&{\rm det}(g)&bd\\c^2&0&d^2}$.
\end{lem}
\begin{proof}
Due to straightforward calculation.
\end{proof}
Put $B'=\pmat{b_{11}&b_{13}\\b_{21}&b_{23}}$ that is the submatrix formed by columns $1$ and $3$ of $B$. Since ${\rm rank}(B')\neq0$
by definition, the proof can be divided into two cases.
\\
{\bf(The case where $
{\rm rank}(B')=1
$)}\\
In the case, there is $\mu\in k^\times$ such that $(b_{11},b_{21})=\mu(b_{13},b_{23})\neq(0,0)$. Take $\pmat{1&0\\0&\lambda}
$ with
$\lambda^4=\mu$ as $g$ in Lemma \ref{lem36}. Then
$$
\det(g)^2\,\lt gBg'^{(2)}=\pmat{\lambda^2 b_{11}&\lambda^4b_{12}&\lambda^6b_{13}\\
\lambda^3b_{21}&\lambda^5b_{22}&\lambda^7b_{23}}
$$
with $(\lambda^2 b_{11},\lambda^3b_{21})=(\lambda^6b_{13},\lambda^7b_{23})$,
that is, $\det(g)^2\,\lt gBg'^{(2)}$ can be expressed by the form $\pmat{x&z&x\\y&w&y}$.
Further, taking $h=\pmat{0&1\\1&0}$ as $g$ in the lemma,
this rearranges the entries of $\pmat{x&z&x\\y&w&y}$ symmetrically about the center.
Now, one may assume $xw-yz=1$ by multiplying a suitable element of $k^\times$ to $B$ since $xw-yz={\rm det}\left(\pmat{x&z\\y&w}\right)\neq0$ since ${\rm rank}(B)=2$. Let $\langle h\rangle$ denote the subgroup of ${\rm SL}_2(k)$ generated by $h$. The group $\langle h\rangle$ acts freely on ${\rm SL}_2(k)$ by left multiplication. 
Therefore the subset $U$ of $k^{\times}\backslash \mathcal{Q}_{d_1}/\sim$ defined by ${\rm rank}(B')=1$
is bijectively parameterized by $\langle h\rangle\backslash{\rm SL}_2(k)$. 
The coordinate ring of ${\rm SL}_2(k)$ is defined by $xw+yz+1=0$ and that of
$\langle h\rangle\backslash{\rm SL}_2(k)$ equals to the 
ring of the invariants of ${\rm SL}_2(k)$ under $\langle h\rangle$ which is generated by $X=x+y,Y=z+w,Z=xy,W=zw$. Therefore $\langle h\rangle\backslash{\rm SL}_2(k)$ is an affine hypersurface defined by $X^2W+Y^2Z+XY+1=0$ and this is clearly smooth and rational.
\\
{\bf(The case where $
{\rm rank}(B')=2
$)}\\
In the case, $B'\in{\rm GL}_2(k)$, and so there is $B''=\pmat{\alpha&\beta\\\gamma&\delta}\in{\rm GL}_2(k)$ such that $B'=\lt B''{B''}^{(4)}$ by replacing $q$ with $q^2$ in Proposition \ref{prop1}.
Then
$$
B=\pmat{\alpha&b_{12}&\gamma\\\beta&b_{22}&\delta}\pmat{\alpha^4&0&\beta^4\\0&1&0\\\gamma^4&0&\delta^4}.
$$
Multiplying $\lambda\in k^\times$ with $\lambda^{10}={\rm det}(B')$ to $B''$, one may assume that ${\rm det}(B'')=1$.
Applying the case where $g={B''}^{-1}$ of Lemma \ref{lem36},
we have
$$
\left\{\lt\varphi_{d_1}\left({B''}^{-1}\right)\right\}^*{M_B}^*\left\{\varphi_{d_1}\left({B''}^{-1}\right)^{(2)}\right\}^*={M_{B'''}}^*,
$$
where
$$
B'''=\pmat{1+y^2z&z&x^2z\\y^2w&w&1+x^2w}\ {\rm for}\ x=\alpha\beta,\ y=\gamma\delta,\ \pmat{z\\w}=\pmat{\delta&\gamma\\\beta&\alpha}\pmat{b_{12}\\b_{22}}.
$$
%
The group $\langle h \rangle$ acts on $k^2$ as $S_2$ the symmetric group of order $2$.
Hence
the complement $V$ of $U$ in $k^{\times}\backslash \mathcal{Q}_{d_1}/\sim$ is parameterized by $(k^2/S_2)^2\simeq\mathbb{A}^4$.
Put ${B_1}''',{B_2}'''$ be the matrices with $(x_1,y_1,z_1,w_1),(x_2,y_2,z_2,w_2)$ of the above form, respectively. If ${B_1}'''={B_2}'''$ then $z_1=z_2$ and $w_1=w_2$. Further if $z_1\neq0$ or $w_1\neq0$ then $x_1=x_2$ and $y_1=y_2$.
Therefore the parameterization is bijective except for a half coordinate plane $H/S_2$ in $\mathbb{A}^4$
as
\begin{align*}
\mathbb{A}^4-(H/S_2)
\longrightarrow V-\left\{k^{\times}{M_{{B_0}'''}}^{\varphi_{d_1}}\right\}\ {\rm and}\ 
H/S_2\longrightarrow\left\{k^{\times}{M_{{B_0}'''}}^{\varphi_{d_1}}\right\},
\end{align*}
where
$$
H=\left\{(x,y,0,0)\in\mathbb{A}^4\ \vert\ x,y\in\mathbb{A}^1\right\}\ {\rm and}\ {B_0}'''=\pmat{1&0&0\\0&0&1}.
$$
Since
\begin{align*}
\mathbb{A}^4-(H/S_2)&\simeq(k^2/S_2)^2-\left[(k^2/S_2)\times\left\{(0,0)\right\}\right]\\
&\simeq(k^2-\{(0,0)\})/S_2\simeq\mathbb{A}^2-\{(0,0)\},
\end{align*}
there is a bijective paramerization $\mathbb{A}^2\rightarrow V$ such that $(0,0)\mapsto k^{\times}{M_{{B_0}'''}}^{\varphi_{d_1}}$.
\end{proof}
To obtain Corollary \ref{cor2}, suppose that $A$ is Hermitian. Then $X_A$ and ${\rm Aut}(X_A)$ are defined over $\mathbb{F}_{q^2}$. If $q=2$ then $X_A$ is rational, and so the corollary is clear. Suppose $q\ge3$. By Theorems \ref{th0} and \ref{thm1}, all curves of $\mathcal{T}_{q+1},\mathcal{T}_{q(q+1)},\mathcal{T}_{\frac{q(q+1)}{2}}$ are projectively isomorphic over $\mathbb{F}_{q^2}$ to the three curves $C_{q+1},C_{q(q+1)},C_{\frac{q(q+1)}{2}}$ whose the coordinates 
are given by
$$
\lt(x_0,x_1,x_2,x_3)=\begin{cases}
\lt(s^{q+1},s^qt,st^q,t^{q+1}),\\
\lt(s^{q(q+1)},s^{q^2-1}t^{q+1},s^{q-1}t^{q^2+1},t^{q(q+1)}),\\
\lt(s^{\frac{q(q+1)}{2}},s^{\frac{q^2-1}{2}}t^{\frac{q+1}{2}},s^{\frac{q-1}{2}}t^{\frac{q^2+1}{2}},t^{\frac{q(q+1)}{2}}),
\end{cases}
$$
respectively. Consider the maps
\begin{align*}
&\,
\begin{aligned}
\lt(s,1)\\
\lt(1,0)
\end{aligned}
\overset{\phi_{1}}{\longmapsto}
\begin{aligned}
&\lt(s^{q+1},s^q,s,1)\\
&\lt(1,0,0,0)
\end{aligned}
\overset{\psi_{1}}{\longmapsto}
\begin{aligned}
&\lt(x_0,x_1)=\lt(s,1)\\
&\lt(x_0,x_1)=\lt(1,0),
\end{aligned}
\\
&\,\lt(s,1)\overset{\phi_{2}}{\longmapsto}\lt(s^{q(q+1)},s^{q^2-1},s^{q-1},1)\overset{\psi_{2}}{\longmapsto}\lt(x_0x_1{x_2}^{-1},1)=\lt(s^{2q^2},1),\\
&\,\lt(s,1)\overset{\phi_{3}}{\longmapsto}\lt(s^{\frac{q(q+1)}{2}},s^{\frac{q^2-1}{2}},s^{\frac{q-1}{2}},1)\overset{\psi_{3}}{\longmapsto}\lt(x_0x_1{x_2}^{-1},1)=\lt(s^{q^2},1).
\end{align*}
Obviously, $\phi_1\circ\psi_1$ is identical on $C_{q+1}$ and $\phi_2\circ\psi_2$, $\phi_3\circ\psi_3$ are identical on $C_{q(q+1)}(\mathbb{F}_{2q^2})$, $C_{\frac{q(q+1)}{2}}(\mathbb{F}_{q^2})$ without the singular points, respectively. Therefore we have Corollary \ref{cor2}.  
\section{
Determination of the number of curves}
\begin{proof}[Proof of Theorem \ref{th2}]
Let $C_{{F_l}^*}=C_{{F_l}^*}(d_l,i_l,j_l)$ be a curve of $\mathcal{T}_{d_l}$ for $l=1,2,3$ and
let ${\rm Stab}(C_{{F_l}^*})$ be the subgroup of ${\rm Aut}(X_A)$ stabilizing $C_{{F_l}^*}$.
Put ${\rm Stab}(C_{{F_l}^*})=\varGamma(C_{{F_l}^*})/k^{\times}I$ , where $\varGamma(C_{{F_l}^*})\subset{\rm GL}_4(k)$, and put $F_l=(\bm{f}_1,\cdots,\bm{f}_{d_l+1})\in\mathcal{M}_{4,d_l+1}$
so that
$$
\bm{f}_j=\begin{cases}
\bm{f}^*_{f_l(j)}&{\rm for}\ j\in D_{d_l}\\
\bm{0}&{\rm for}\ j\not\in D_{d_l}
\end{cases},
$$
where ${F_l}^*=(\bm{f}^*_1,\bm{f}^*_2,\bm{f}^*_3,\bm{f}^*_4)$.
By definition, for every $H_l\in\varGamma(C_{{F_l}^*})$ there is $\lambda_l\in k^{\times}$ such that $\lt H_lA{H_l}^{(q)}=\lambda_l A$ and
$$
H_lk^\times {F_l}{\rm Im}(\varphi_{d_l})=k^\times F_l{\rm Im}(\varphi_{d_l}),
$$ 
and so
$$
H_lk^\times {F_l}^*{\rm Im}(\varphi_{d_l})^*=k^\times {F_l}^*{\rm Im}(\varphi_{d_l})^*.
$$
Hence there is $g_l\in{\rm GL}_2(k)$ such that $H_l={F_l}^*\varphi_{d_l}(g_l)^*{{F_l}^*}^{-1}$. Note that ${\rm Stab}(C_{{F_l}^*})\simeq{\rm Stab}(C_{{G_l}^*})$ for any $C_{{F_l}^*},C_{{G_l}^*}\in \mathcal{T}_{d_l}$. Indeed, the projective transformation $T_l=k^{\times} {{G_l}^*}{{F_l}^*}^{-1}$ mapping $C_{{F_l}^*}$ to $C_{{G_l}^*}$ induces the isomorphism
\begin{align*}
\begin{array}{cccc}\hspace{-5mm}{\rm Stab}(C_{{F_l}^*})&\overset{\simeq}{\longrightarrow}&T_l\,{\rm Stab}(C_{{F_l}^*})\,{T_l}^{-1}&={\rm Stab}(C_{{G_l}^*})\\
\hspace{-5mm}\rotatebox{90}{$\in$}&&\rotatebox{90}{$\in$}&\\
\hspace{-5mm}H_lk^\times&\longmapsto&T_l\,H_l\,{T_l}^{-1}k^\times.&
\end{array}
\end{align*}
Therefore it suffices to prove the assertion for
the curves corresponding to ${R_1}^*,{R_2}^*,{R_3}^*$ of Remark \ref{remRl}. Since the case of ${R_1}^*$ had been proven in \cite{O}, we will show those of ${R_2}^*,{R_3}^*$.

Let
$C_{{F_l}^*}$ be the curve of $\mathcal{T}_{d_l}$ with $\lt {F_l}^*A{{F_l}^*}^{(q)}={R_l}^*$ for $l=2,3$. By the above argument, for every $H_l\in\varGamma(C_{{F_l}^*})$ there  are $\lambda\in k^{\times}$ and  $g_l\in{\rm GL}_2(k)$ such that $\lt H_lA{H_l}^{(q)}=\lambda A$ and $H_l={F_l}^*\varphi_{d_l}(g_l)^*{{F_l}^*}^{-1}$. 
Since one may assume $\lambda=1$, multiplying $g_l$ by a suitable element of $k^{\times}$ if necessary,
one has 
\begin{equation}\label{eq1}
\lt\left\{\varphi_{d_l}(g_l)^*\right\}{R_l}^*\left\{\varphi_{d_l}(g_l)^*\right\}^{(q)}={R_l}^*.
\end{equation}
Let us characterize $g_l\in{\rm GL}_2(k)$ satisfying the above equality.
\\
{\bf Case of $C_{{F_2}^*}$:} By definition,
\begin{equation*}
\varphi_{d_2}(g_2)^*=
\begin{pmatrix}
a^{q^2+q}&0&0&b^{q^2+q}\\
a^{q^2-1}c^{q+1}&\alpha&\beta&b^{q^2-1}d^{q+1}\\
a^{q-1}c^{q^2+1}&\gamma&\delta&b^{q-1}d^{q^2+1}\\
c^{q^2+q}&0&0&d^{q^2+q}\\
\end{pmatrix}
\ \ 
{\rm for}
\ 
g_2=
\begin{pmatrix}
a&b\\
c&d
\end{pmatrix},
\end{equation*}
where 
\begin{equation*}
\begin{array}{lll}
\alpha=a^{q^2-q-2}\det(g_2)^{q+1},&\beta=a^{q-2}b^{q^2-q}d^{q}\det(g_2),\\
\gamma=0,&\delta=a^{q-2}d^{q^2}\det(g_2).
\end{array}
\end{equation*}
Then it follows by straightforward calculation that 
$$
g_2\ {\rm satisfies}\ \eqref{eq1}\ {\rm if\ and\ only\ if}\ b=c=0,\ a=\rho^n\zeta^{mq^2}\ {\rm and}\ d=\zeta^m,
$$
where $n,m\in\mathbb{Z}$
and $\rho,\zeta$ are primitive $(q+1),(q+1)(q^3+1)$-th roots of unity, respectively. Further, for such $g_2$ one can easily see that
\begin{center}
$H_2={F_2}^*\varphi_{d_2}(g_2)^*{{F_2}^*}^{-1}$ is equal to $I$ if and only if $n,m$ are divisible by $q+1,q^3+1$, respectively.
\end{center}
Hence
there is an isomorphism
\begin{align*}
\begin{array}{ccc}\hspace{-5mm}\mathbb{Z}/(q+1)\mathbb{Z}\times\mathbb{Z}/(q^3+1)\mathbb{Z}
&\overset{\simeq}{\longrightarrow}&{\rm Stab}(C_{{F_2}^*})\\
\hspace{-5mm}\rotatebox{90}{$\in$}&&\rotatebox{90}{$\in$}\\
(\overline{n},\overline{m})
&\longmapsto&{{F_2}^*} \varphi_{d_2}\left(
\setlength{\arraycolsep}{1pt}
\begin{pmatrix}
\rho^{\overline{n}}\zeta^{\overline{m}q^2}&0\\
0&\zeta^{\overline{m}}
\end{pmatrix}
\right)^*{{F_2}^*}^{-1}k^{\times}.
\end{array}
\end{align*}
{\bf Case of $C_{{F_3}^*}$:}
In this case, 
\begin{equation*}
\varphi_{d_3}(g_3)^*=
\begin{pmatrix}
a^{\frac{q^2+q}{2}}&0&0&b^{\frac{q^2+q}{2}}\\
a^{\frac{q^2-1}{2}}c^{\frac{q+1}{2}}&\alpha&\beta&b^{\frac{q^2-1}{2}}d^{\frac{q+1}{2}}\\
a^{\frac{q-1}{2}}c^{\frac{q^2+1}{2}}&\gamma&\delta&b^{\frac{q-1}{2}}d^{\frac{q^2+1}{2}}\\
c^{\frac{q^2+q}{2}}&0&0&d^{\frac{q^2+q}{2}}\\
\end{pmatrix}
\ \ 
{\rm for}
\ 
g_3=
\begin{pmatrix}
a&b\\
c&d
\end{pmatrix},
\end{equation*}
where 
\begin{align*}
\alpha&=a^{\frac{q^2-q-2}{2}}\sum_{j=0}^{\frac{q+1}{2}}\binom{\frac{q+1}{2}}{j}\binom{\frac{q^2-1}{2}}{j}a^{\frac{q+1-2j}{2}}b^{j}c^{j}d^{\frac{q+1-2j}{2}},\\
\beta&=b^{\frac{q^2-q}{2}}d\sum_{j=0}^{\frac{q-1}{2}}\binom{\frac{q+1}{2}}{j}\binom{\frac{q^2-1}{2}}{\frac{q-1-2j}{2}}a^{\frac{q-1-2j}{2}}b^{j}c^{j}d^{\frac{q-1-2j}{2}},\\
\gamma&=c^{\frac{q^2-q}{2}}d\sum_{j=0}^{\frac{q-1}{2}}\binom{\frac{q-1}{2}}{j}\binom{\frac{q^2+1}{2}}{\frac{q+1-2j}{2}}a^{\frac{q-1-2j}{2}}b^{j}c^{j}d^{\frac{q-1-2j}{2}},\\
\delta&=d^{\frac{q^2-q+2}{2}}\sum_{j=0}^{\frac{q-1}{2}}\binom{\frac{q-1}{2}}{j}\binom{\frac{q^2+1}{2}}{j}a^{\frac{q-1-2j}{2}}b^{j}c^{j}d^{\frac{q-1-2j}{2}}.
\end{align*}
Here, if $g_3$ satisfies \eqref{eq1}, we have the following equations:
\begin{align}
\label{6}a^{\frac{q-1}{2}}\left(a^{\frac{q^2+1}{2}}\alpha^q-c^{\frac{q^2+1}{2}}\gamma^q\right)&=1,\\
\label{7}d^{\frac{q^3+q}{2}}\left(d^{\frac{q^2-q}{2}}\alpha-b^{\frac{q^2-q}{2}}\gamma\right)&=1,\\
\label{8}a^{\frac{q-1}{2}}\left(a^{\frac{q^2+1}{2}}\beta^q-c^{\frac{q^2+1}{2}}\delta^q\right)&=0,\\
\label{9}d^{\frac{q^3+q}{2}}\left(d^{\frac{q^2-q}{2}}\beta-b^{\frac{q^2-q}{2}}\delta\right)&=0,\\
\label{10}b^{\frac{q-1}{2}}\left(b^{\frac{q^2+1}{2}}\beta^q-d^{\frac{q^2+1}{2}}\delta^q\right)&=0,\\
\label{11}c^{\frac{q^3+q}{2}}\left(c^{\frac{q^2-q}{2}}\beta-a^{\frac{q^2-q}{2}}\delta\right)&=0,\\
\label{12}\delta^{q+1}&=1,\\
\label{13}\gamma&=0.
\end{align}
By \eqref{6}, \eqref{7}, one has $ad\neq0$. Now, assume $bc\neq0$. Then, from \eqref{8},\eqref{9},\eqref{10}\eqref{11} we have a pair of simultaneous equations in $(\beta^q,\delta^q)$ and $(\beta,\delta)$:
$$
\pmat{a^{\frac{q^2+1}{2}}&-c^{\frac{q^2+1}{2}}\\b^{\frac{q^2+1}{2}}&-d^{\frac{q^2+1}{2}}}\pmat{\beta^q\\\delta^q}=\pmat{0\\0},\ \ \pmat{d^{\frac{q^2-q}{2}}&-b^{\frac{q^2-q}{2}}\\c^{\frac{q^2-q}{2}}&-a^{\frac{q^2-q}{2}}}\pmat{\beta\\\delta}=\pmat{0\\0}.
$$
Since $(\beta,\delta)\neq(0,0)$ by \eqref{eq1}, the determinants of the coefficient matrices must be both zero, and so we have the following simultaneous equation in $ad,bc$:
$$
\sum_{j=0}^{\frac{q^2-1}{2}}(ad)^{\frac{q^2-1-2j}{2}}(bc)^j=0,\ \ \sum_{j=0}^{\frac{q^2-q-2}{2}}(ad)^{\frac{q^2-q-2-2j}{2}}(bc)^j=0.
$$
This means that the two polynomials
$$
F_1(x,y)=\sum_{j=0}^{\frac{q^2-1}{2}}x^{\frac{q^2-1-2j}{2}}y^j,\ \ F_2(x,y)=\sum_{j=0}^{\frac{q^2-q-2}{2}}x^{\frac{q^2-q-2-2j}{2}}y^j
$$
have common roots. But it is impossible,
because the resultant is equal to $1\neq0$, a contradiction. Hence $bc=0$ must be true, and then using the equations from \eqref{6} to \eqref{13}, one has $b=c=0$, $a=\eta^n\omega^{mq^2}$ and $d=\omega^m$, where $n,m\in\mathbb{Z}$ and
$\eta,\omega$ are primitive $(q+1)/2$, $(q+1)(q^3+1)/2$-th roots of unity, respectively. Conversely if $g_3$ has these values, it satisfies \eqref{eq1}, and thus we have
\begin{center}
$g_3$ satisfies \eqref{eq1} if and only if $b=c=0$, $a=\eta^n\omega^{mq^2}$ and $d=\omega^m$.
\end{center}
For such $g_3$ one can easily verify that
\begin{center}
$H_3={F_3}^*\varphi_{d_3}(g_3)^*{{F_3}^*}^{-1}$ is equal to $I$ if and only if $n,m$ are divisible by $(q+1)/2,q^3+1$ respectively.
\end{center}
Hence there is an isomorphism
$$
\mathbb{Z}/\left((q+1)/2\right)\mathbb{Z}\times\mathbb{Z}/\left(q^3+1\right)\mathbb{Z}\longrightarrow{\rm Stab}(C_{{F_3}^*}),
$$
and thus the proof of Theorem \ref{th2} has been completed.
\end{proof}
As is well-known,
\begin{equation*}
|{\rm Aut}(X_A)|=q^6(q^4-1)(q^3+1)(q^2-1).
\end{equation*}
Since
\begin{equation*}
|\mathcal{O}_{d_l}|=|{\rm Aut}(X_A)|/|{\rm Stab}(C_{{F_l}^*})|\ {\rm for}\ l=1,2,3,
\end{equation*}
these numbers
are easily calculated by Theorem \ref{th2}.

\begin{rem}
Any two ${\rm Aut}(X_A)$-orbits of $\mathcal{T}_{d_l}$ for $l=1,2,3$ are in one-to-one correspondence by projective transformation since any two curves of $\mathcal{T}_{d_l}$ are projectively equivalent as described in the above proof.
This particularly implies that each of the combinatorial structures constructed in \cite{O1} which are obtained from the ${\rm Aut}(X_A)$-orbits of $\mathcal{T}_{q+1}$ is unique regardless of the choice of the orbit,
although the orbits exist infinitely many when $q=2$ as we have shown.
\end{rem}

\end{document}